\begin{document}

\def\A{\mathbb{A}}
\def\B{\mathbf{B}}
\def \C{\mathbb{C}}
\def \F{\mathbb{F}}
\def \K{\mathbb{K}}

\def \Z{\mathbb{Z}}
\def \P{\mathbb{P}}
\def \R{\mathbb{R}}
\def \Q{\mathbb{Q}}
\def \N{\mathbb{N}}
\def \Z{\mathbb{Z}}

\def\B{\mathcal B}
\def\e{\varepsilon}

\def\cA{{\mathcal A}}
\def\cB{{\mathcal B}}
\def\cC{{\mathcal C}}
\def\cD{{\mathcal D}}
\def\cE{{\mathcal E}}
\def\cF{{\mathcal F}}
\def\cG{{\mathcal G}}
\def\cH{{\mathcal H}}
\def\cI{{\mathcal I}}
\def\cJ{{\mathcal J}}
\def\cK{{\mathcal K}}
\def\cL{{\mathcal L}}
\def\cM{{\mathcal M}}
\def\cN{{\mathcal N}}
\def\cO{{\mathcal O}}
\def\cP{{\mathcal P}}
\def\cQ{{\mathcal Q}}
\def\cR{{\mathcal R}}
\def\cS{{\mathcal S}}
\def\cT{{\mathcal T}}
\def\cU{{\mathcal U}}
\def\cV{{\mathcal V}}
\def\cW{{\mathcal W}}
\def\cX{{\mathcal X}}
\def\cY{{\mathcal Y}}
\def\cZ{{\mathcal Z}}

\def\f{\frac{|\A||B|}{|G|}}
\def\AB{|\A\cap B|}
\def \Fq{\F_q}
\def \Fqn{\F_{q^n}}

\def\({\left(}
\def\){\right)}
\def\fl#1{\left\lfloor#1\right\rfloor}
\def\rf#1{\left\lceil#1\right\rceil}
\def\Res{{\mathrm{Res}}}

\newcommand{\comm}[1]{\marginpar{
\vskip-\baselineskip \raggedright\footnotesize
\itshape\hrule\smallskip#1\par\smallskip\hrule}}

\newtheorem{lem}{Lemma}
\newtheorem{lemma}[lem]{Lemma}
\newtheorem{prop}{Proposition}
\newtheorem{proposition}[prop]{Proposition }
\newtheorem{thm}{Theorem}
\newtheorem{theorem}[thm]{Theorem}
\newtheorem{cor}{Corollary}
\newtheorem{corollary}[cor]{Corollary}
\newtheorem{prob}{Problem}
\newtheorem{problem}[prob]{Problem}
\newtheorem{ques}{Question}
\newtheorem{question}[ques]{Question}
\newtheorem{rem}{Remark}

\title{On the congruence $x^{x}\equiv \lambda\pmod p$}

\author{
{\sc J. Cilleruelo} and {\sc M.~Z.~Garaev} }

\date{}

\maketitle

\begin{abstract}
In the present paper we obtain several new results related to the
problem of upper bound estimates for the number of solutions of the
congruence
$$
x^{x}\equiv \lambda\pmod p;\quad x\in \mathbb{N},\quad x\le p-1,
$$
where $p$ is a large prime number, $\lambda$ is an integer corpime
to $p$. Our arguments are based on recent estimates of trigonometric
sums over subgroups due to Shkredov and Shteinikov.
\end{abstract}

\section{Introduction}

For a prime $p$ and an integer $\lambda$ let $J(p; \lambda)$ be the
number of solutions of the congruence
\begin{equation}
\label{eqn:MainCongr}
x^{x}\equiv \lambda\pmod p;\quad x\in \mathbb{N},\quad x\le p-1.
\end{equation}
Note that the period of the function $x^x$ modulo $p$ is $p(p-1)$,
which is larger than the range in congruence~\eqref{eqn:MainCongr}.

From the works of Crocker~\cite{Cr} and Somer~\cite{Som} it is known
that there are at least $\lfloor(p-1)/2\rfloor$ and at most $3p/4
+p^{1/2+o(1)}$ incongruent values of $x^x\pmod p$ when $1\le x\le
p-1$. There are several conjectures in~\cite{HM} related to this
function.

New approaches to study $J(p;\lambda)$ were given by Balog, Broughan
and Shparlinski, see~\cite{BBSh} and~\cite{BBSh2}. In the special
case $\lambda=1$ it was shown in~\cite{BBSh} that $J(p; 1)<p^{1/3 +
o(1)}$. This estimate was slightly improved in our work~\cite{JG0}
to the bound $J(p; 1)\ll p^{1/3 - c}$ for some absolute constant
$c>0$. Note that the method of~\cite{JG0} applies for a more general
exponential congruences, however, the constant $c$ there becomes too
small. In the present paper we use a different approach and prove
the following results.

\begin{theorem}
\label{thm:Main1} The number $J(p;1)$ of solutions of the congruence
\begin{equation}
\label{eqn:Th1}
x^{x}\equiv 1\pmod p;\quad x\in \mathbb{N},\quad x\le p-1,
\end{equation}
satisfies $J(p;1)\lesssim p^{27/82}$.
\end{theorem}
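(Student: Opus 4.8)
\emph{Sketch of the approach.} The plan is to convert the problem into a question about the distribution of multiplicative subgroups of $\F_p^*$ in short intervals, and then to feed in the recent trigonometric sum bounds over subgroups. First I would reduce as follows. If $1\le x\le p-1$ and $x^x\equiv 1\pmod p$, let $d$ be the multiplicative order of $x$ modulo $p$; then $d\mid p-1$ and $d\mid x$, and conversely every such $x$ solves \eqref{eqn:Th1}. Write $x=dm$ with $1\le m\le (p-1)/d=:\ell$ (an integer, since $d\mid p-1$); the condition $x^d\equiv 1$ says that $dm$ lies in the subgroup $H_d\subset\F_p^*$ of order $d$, i.e. $m\equiv ch\pmod p$ for some $h\in H_d$, where $c\equiv d^{-1}\pmod p$. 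Hence
\[ J(p;1)\;\le\;\sum_{d\mid p-1}N(d),\qquad N(d):=\#\bigl(cH_d\cap\{1,2,\dots,\ell\}\bigr)=\bigl|\,cH_d\cap[1,(p-1)/d]\,\bigr|. \]
Since $\tau(p-1)=p^{o(1)}$, it suffices to prove $N(d)\lesssim p^{27/82}$ uniformly in $d\mid p-1$. When $d\le p^{27/82}$ this is immediate from $N(d)\le|H_d|=d$, and when $d\ge p^{55/82}=p^{1-27/82}$ it is immediate from $N(d)\le\ell\le p/d$; so only the middle range $p^{27/82}<d<p^{55/82}$ requires work.

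In that range I would complete the interval by additive characters modulo $p$:
\[ N(d)=\frac{|H_d|\,\ell}{p}+\frac1p\sum_{a\ne 0}\overline{\widehat I(a)}\;S_{ac}(H_d),\qquad S_b(H_d):=\sum_{h\in H_d}e^{2\pi i bh/p}, \]
where $\widehat I(a)=\sum_{1\le m\le\ell}e^{2\pi i am/p}$ obeys $|\widehat I(a)|\ll\min(\ell,\|a/p\|^{-1})$ and the main term equals $(p-1)/p<1$. The error term is then treated by splitting the frequencies $a$ into dyadic ranges and, on each range, applying H\"older's inequality together with the Shkredov--Shteinikov estimates: their bound for $\max_{b\ne0}|S_b(H_d)|$, and, where it is stronger, their bounds for the moments $\sum_b|S_b(H_d)|^{2k}$ (equivalently, for the additive energies of $H_d$). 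Since $S_b(cH_d)=S_{bc}(H_d)$, the coset factor $c$ merely permutes the frequencies and plays no role in these bounds. Optimising the dyadic cut-off and the moment $k$, and matching the resulting estimate for $N(d)$ against the trivial bounds $d$ and $p/d$ at the endpoints of the range, is what produces the exponent $27/82$.

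The main obstacle is the window $d\asymp p^{1/2}$, where $|H_d|$ and $\ell$ are both of size about $p^{1/2}$: there the classical Gauss-sum estimate gives only $|S_b(H_d)|\le\min(d,p^{1/2})=p^{1/2}$, which merely reproduces the useless bound $N(d)\le p^{1/2}$. The entire gain over the earlier estimate $J(p;1)<p^{1/3+o(1)}$ of \cite{BBSh} comes from the Shkredov--Shteinikov bounds being non-trivial throughout a neighbourhood of this window, and $27/82$ is precisely the exponent those bounds deliver after the optimisation. A secondary point to watch is keeping every estimate uniform over the whole range $p^{27/82}<d<p^{55/82}$, not only at $d=p^{1/2}$, and controlling the lowest frequencies $a$, where $\widehat I(a)$ can be as large as $\ell$ and a crude bound would be fatal.
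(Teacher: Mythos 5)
Your reduction to $N(d)=\bigl|cH_d\cap[1,(p-1)/d]\bigr|$ and your treatment of the two end ranges $d\le p^{27/82}$ and $d\ge p^{55/82}$ are correct and essentially identical to the paper's set-up. The gap is in the middle range $p^{27/82}<d<p^{55/82}$, where you propose to run entirely on completion plus the Shkredov--Shteinikov bounds. That cannot work, and the window $d\asymp p^{1/2}$ that you single out as the main obstacle is exactly where it breaks down: there $\max_{b\ne0}|S_b(H_d)|$ is nowhere near $p^{27/82}$ by any known estimate. Shteinikov's bound (Lemma~\ref{lem:Sht}) is only available for $d<p^{1/2}$ and at that threshold gives $p^{1/18}d^{101/126}=p^{115/252}$; Shkredov's bound (Lemma~\ref{lem:Shk}) gives $p^{1/6}d^{1/2}=p^{5/12}$; and the moment/energy route fares no better, since even an additive-energy bound $E(H_d)\ll d^{5/2}$ fed into H\"older yields only $N(d)\lesssim p^{1/4}d^{3/8}=p^{7/16}$ at $d=p^{1/2}$. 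All of these exceed $p^{27/82}$ by a wide margin, so no amount of optimising the dyadic decomposition or the moment $k$ will close the gap.

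What is missing is the paper's Lemma~\ref{lem:CongrMoment}, an elementary non-Fourier ingredient: the number of $x\le M$ with $x^{n}\equiv\lambda\pmod p$ is $\lesssim(1+Mp^{-1/k})n^{1/k}$, proved by raising to the $k$-th power, writing $x_1\cdots x_k=\lambda_0+py$ and invoking the divisor bound. With $k=2$ (and $k=3$ near the top of the range) this gives $N(d)\lesssim d^{1/2}+(p/d)^{1/2}$, which is about $p^{1/4}$ at $d=p^{1/2}$ and stays at or below $p^{27/82}$ throughout $p^{14/41}\le d\le p^{5/7}$. The subgroup exponential sums are needed only in the narrow band just above $p^{1/3}$, roughly $p^{27/82}<d<p^{14/41}$, where the trivial bound $N(d)\le d$ and the divisor bound $(p/d)^{1/2}$ both just fail; there Shteinikov's $p^{1/18}d^{101/126}$ wins, and balancing it against $(p/d)^{1/2}$ at $d=p^{14/41}$ is what produces the exponent $27/82$ (note $14/41=28/82$). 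So you have the roles of the two tools reversed: the gain over $p^{1/3}$ comes from the exponential sums only near $d\approx p^{1/3}$, while the supposedly critical window $d\asymp p^{1/2}$ is handled by the elementary divisor-function argument that your proposal omits.
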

Here and below we use the notation  $A\lesssim B$ to denote that
$A<Bp^{o(1)}$; that is, for any $\varepsilon>0$ there exists
$c=c(\varepsilon)>0$ such that $A < c B p^{\varepsilon}$. As usual,
${\rm ord}\,\lambda$ denotes the multiplicative order of $\lambda$,
that is, the smallest positive integer $t$ such that
$\lambda^t\equiv 1 \pmod p.$ We recall that ${\rm ord}\, \lambda |
p-1$.

\begin{theorem}
\label{thm:Main2} Uniformly over $t|p-1$, we have, as $p\to\infty$,
\begin{equation}
\label{eqn:Th2}
\sum_{\substack{1\le \lambda\le p-1\\ {\rm ord} \,\lambda =t}}J(p; \lambda)\lesssim t+p^{1/3}t^{1/2}.
\end{equation}
\end{theorem}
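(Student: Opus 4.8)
The plan is to parametrize solutions and reduce the count to a problem about multiplicative subgroups, then invoke the Shkredov–Shteinikov estimates for exponential sums over subgroups.

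Let me think about the structure. We want to count pairs $(x,\lambda)$ with $1\le x\le p-1$, $1\le\lambda\le p-1$, $\mathrm{ord}\,\lambda=t$, and $x^x\equiv\lambda\pmod p$. Write $x\equiv g^{y}\pmod p$ for a fixed primitive root $g$, where $0\le y\le p-2$ is determined by $x$. Then $x^x\equiv g^{xy}\pmod p$, and the condition $\mathrm{ord}(x^x)=t$ is equivalent to $\gcd(xy,p-1)=(p-1)/t$, in particular $x^x$ lies in the unique subgroup $H$ of order $t$. So $\sum_{\mathrm{ord}\,\lambda=t}J(p;\lambda)$ equals the number of $x\in[1,p-1]$ with $x^x\in H$.

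First I would split according to $d=\gcd(x,p-1)$. Writing $x=dm$ with $\gcd(m,(p-1)/d)=1$, the condition $x^x\in H$ becomes $g^{dmy}\in H$, i.e. $t\cdot dmy\equiv 0\pmod{p-1}$, which (since $\gcd(m,(p-1)/d)=1$) reduces to $ty\equiv 0\pmod{(p-1)/d}$ possibly after clearing common factors of $m$ with the rest; more carefully, the condition is $(p-1)\mid t\,x\,\mathrm{ind}(x)$. The cleanest route: $x^x\in H \iff x^{xt'}\equiv 1\pmod p$ where $t'=(p-1)/t$, i.e. $x^{xt'}\equiv1$. Counting $x$ with $x^{xt'}\equiv 1\pmod p$: for each divisor $e\mid t'$, the $x$ with $\gcd(x,(p-1))\cdot(\text{order of }x)$ conditions... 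This is getting complicated; the right move is to fix $k=\gcd(x t', p-1)$ or rather to note that $x^{xt'}\equiv1$ means $x$ belongs to the subgroup of order $\gcd(p-1, xt')$, and then use that $x$ ranges over a short interval.

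The key analytic input I would use: for each residue $a\pmod{p-1}$ with suitable gcd structure, the number of $x\in[1,N]$ lying in a fixed coset of the subgroup of order $s$ is, by completing the sum and using the Shkredov–Shteinikov bound on $\max_{\gcd(c,p)=1}\big|\sum_{u\in \Gamma_s}e_p(cu)\big|$ for subgroups $\Gamma_s$ of order $s$, roughly $Ns/p + (\text{subgroup exp-sum bound})\cdot\log p$. Summing the main term $Ns/p$ with $N=p$ over the relevant $s\mid t'$ (weighted by how many $x$ force order $s$) gives the $t$ term; the error terms, summed over the divisor structure and over the $O(p^{o(1)})$ many divisors, give the $p^{1/3}t^{1/2}$ term. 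Concretely, I expect the dominant error to come from a Weyl-type sum $\sum_{x}e_p(\lambda x^x)$ or its subgroup analogue where one fixes the "subgroup part" of $x$ and sums over the residue of $x$ modulo $(p-1)$ in an interval, landing on an incomplete character/exponential sum over a subgroup of size about $t$, to which the Shkredov–Shteinikov estimate contributes $\lesssim t^{1/2}p^{?}$; balancing against the trivial bound $t$ produces the stated $t+p^{1/3}t^{1/2}$.

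The main obstacle will be handling the entanglement between the multiplicative variable (the index of $x$ mod $p-1$) and the additive/ordering variable (the value of $x$ as an integer in $[1,p-1]$): these are genuinely different, since $x$ appears both as a base and as an exponent. I anticipate decomposing $x$ via $x\equiv g^y\pmod p$ and then, for the exponent, writing $x$ itself in terms of an interval variable, so that the congruence $g^{xy}\in H$ becomes, after fixing the index $y$ and the gcd $d=\gcd(x,p-1)$, a linear condition on $x$ in an interval. Bounding the number of $x$ in an interval satisfying such a linear condition modulo $(p-1)/d$, then summing over $y$ subject to $g^{y}$ lying in the appropriate subgroup (here is where the subgroup exponential sum of order $\sim t$ enters), is the technical heart. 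The tricky bookkeeping is ensuring the divisor sums over $d$ and over $\gcd(xt',p-1)$ only cost $p^{o(1)}$ and that the main terms telescope to exactly $t$; I would treat the ranges $t\le p^{2/3}$ and $t>p^{2/3}$ separately so that in each the claimed bound is dominated by the relevant term.
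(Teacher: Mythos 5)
Your overall plan --- reduce the weighted count to the number of $x\le p-1$ with $x^x$ in the subgroup of order $t$, then complete an incomplete sum over a multiplicative subgroup and apply the Shkredov--Shteinikov bound --- is the right general direction, but as written the proposal contains a concrete error and leaves the two hardest steps unresolved. First, the reduction is stated backwards: $x^x$ lies in the subgroup $H$ of order $t$ if and only if $x^{tx}\equiv 1\pmod p$, not $x^{xt'}\equiv 1$ with $t'=(p-1)/t$ (the latter characterizes the subgroup of order $(p-1)/t$). From $x^{tx}\equiv 1$ one sets $d=\gcd(x,(p-1)/t)$ and uses Lemma~\ref{lem:easy} to get $x^{dt}\equiv 1$; writing $x=dz$ with $z\le (p-1)/d$, the condition becomes $dz\bmod p\in H_{dt}$, i.e.\ the relevant subgroup has order $dt$, not $t$. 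This matters because Lemma~\ref{lem:Shk} requires the subgroup order to be below $p^{2/3}$ and its saving depends on $dt$, so one must first dispose of large $t$ (the paper does this by noting that Lemma~\ref{lem:BBSh} already gives the claim for $t\ge p^{1/3}$, so one may assume $t<p^{1/3}$ throughout --- your proposed split at $p^{2/3}$ does not obviously achieve this) and then restrict to $d<p^{1/3}$ before the exponential-sum step applies.

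Second, and more seriously, your proposal has no mechanism for the middle range of $d$. For $d<p^{1/3}$ the completed sum together with Lemma~\ref{lem:Shk} and Lemma~\ref{lem:Vin} gives $T_d\lesssim t+p^{1/6}(dt)^{1/2}\lesssim t+p^{1/3}t^{1/2}$, and for $d>p^{2/3}$ the trivial bound $T_d\le p/d\le p^{1/3}$ suffices; but for $p^{1/3}<d<p^{2/3}$ neither works (the subgroup $H_{dt}$ is too large for the exponential-sum bound to save anything, and $p/d$ is too big). The paper handles this range with an entirely different tool, Lemma~\ref{lem:CongrMoment} with $k=2$: the congruence $z^{dt}\equiv\mu\pmod p$ with $z$ ranging over an interval of length $p/d$ has $\lesssim (d^{1/2}+(p/d)^{1/2})t^{1/2}\lesssim p^{1/3}t^{1/2}$ solutions by a divisor-function argument. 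Without this ingredient (or a substitute for it) the argument cannot close, and your sketch --- which leaves the key exponent as ``$p^{?}$'' and defers the essential bookkeeping to ``I anticipate'' --- does not yet constitute a proof.
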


In the range $t<p^{1/3}$ our Theorem~\ref{thm:Main2} improves some
results of the aforementioned works~\cite{BBSh} and~\cite{BBSh2}.
Note that in the case $t=1$ the estimate of Theorem~\ref{thm:Main1}
is stronger. In fact, following the argument that we use in the
proof of Theorem ~\ref{thm:Main1} it is posible to improve
Theorem~\ref{thm:Main2} in specific small ranges of $t$.

Let now $I(p)$ denote the number of solutions of the congruence
$$
x^x\equiv y^y\pmod p; \quad x\in \mathbb{N}, \quad y\in \mathbb{N},\quad x\le p-1, \quad y\le p-1.
$$
There is the following relationship between $I(p)$ and
$J(p;\lambda)$:
$$
I(p)=\sum_{\lambda=1}^{p-1}J(p;\lambda)^2.
$$
We modify one of the arguments of~\cite{BBSh} and obtain the
following refinement on ~\cite[Theorem 8]{BBSh}.
\begin{theorem}
\label{thm:Main3} We have, as $p\to\infty,$
\begin{equation}
\label{eqn:Th3}
I(p)\lesssim p^{23/12}.
\end{equation}
\end{theorem}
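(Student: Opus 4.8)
The plan is to bound $I(p)=\sum_{\lambda=1}^{p-1}J(p;\lambda)^2$ by counting quadruples $(x,y)$ with $x^x\equiv y^y\pmod p$ directly, following the strategy of \cite{BBSh} but optimizing the split between a ``small'' and a ``large'' range of the variable. First I would parametrize each $x\le p-1$ by writing $x\equiv g^{a}\pmod p$ for a fixed primitive root $g$, so that $x^x\equiv g^{ax}\pmod p$ and the congruence $x^x\equiv y^y$ becomes $ax\equiv by\pmod{p-1}$. One then needs to count pairs $(x,y)$ satisfying this linear relation where $a=\mathrm{ind}_g x$, $b=\mathrm{ind}_g y$; the key point is that $a$ is essentially a ``random-looking'' function of $x$, so the system $ax\equiv by\pmod{p-1}$ should have few solutions on average.

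The main step is to separate the contribution according to the size of $\gcd(x,p-1)$ or, more precisely, to treat small $x$ trivially and handle large $x$ via a counting/character-sum argument. For $x$ in a short initial segment $1\le x\le H$ one bounds the number of quadruples trivially by something like $H\cdot p\cdot$(typical fibre size), and for $x>H$ one exploits that the map $x\mapsto x^x$ has controllable fibres because $x$ ranges over residues where $g^{ax}$ equidistributes. Concretely, I would fix $x$ and $y$ with $x^x\equiv y^y$, write this as $g^{ax-by}\equiv 1$, i.e. $(p-1)\mid ax-by$, and count solutions by summing over divisors $d\mid p-1$ of the number of $(x,y)$ with $ax\equiv by\pmod d$ and the complementary condition modulo $(p-1)/d$; completing to a full character sum over $\Z/(p-1)$ and using standard bounds on Gauss-type sums (or just the large sieve / Cauchy–Schwarz) should give a saving. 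Balancing $H$ against the main term and using $\sum_{d\mid p-1}1=p^{o(1)}$ will produce the exponent.

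The hard part will be getting the exponent $23/12$ rather than a weaker one: this requires the split between the trivial range and the analytic range to be done carefully, and it requires using the best available input — presumably the Shkredov–Shteinikov bounds on trigonometric sums over subgroups mentioned in the abstract — to control the number of $x$ with $\mathrm{ind}_g x$ lying in a prescribed residue class modulo a divisor of $p-1$. I expect the delicate point to be handling the divisors $d\mid p-1$ that are neither very small nor very large, where neither the trivial bound nor the full equidistribution bound alone suffices, so one interpolates; the factor $p^{23/12}$ should emerge from optimizing a two- or three-term bound of the shape $H p + p^{2}/H + (\text{subgroup-sum term})$ after inserting the numerical exponents coming from \cite{BBSh} and the Shkredov–Shteinikov estimates.

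Finally, since $I(p)=\sum_\lambda J(p;\lambda)^2$, any bound one proves on the number of quadruples $(x,y)$ is exactly a bound on $I(p)$, so no further passage is needed; the improvement over \cite[Theorem 8]{BBSh} comes purely from sharpening the internal estimate for the number of $x\le p-1$ whose index lies in a fixed congruence class, which is where the recent subgroup sum estimates enter.
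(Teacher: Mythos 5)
Your proposal is an outline of a strategy, not a proof: at every point where a quantitative saving is needed you write that a character-sum or large-sieve argument ``should give a saving'' and that the exponent $23/12$ ``should emerge from optimizing,'' but you never identify the two concrete inputs that actually produce this exponent, and the balancing you describe (a bound of the shape $Hp+p^2/H+\cdots$) does not obviously yield $23/12$. The heuristic that $a=\mathrm{ind}_g x$ is ``random-looking,'' so that $ax\equiv by\pmod{p-1}$ has few solutions, is exactly the hard content of the problem; it cannot be invoked as if it were known. Moreover, you guess that the Shkredov--Shteinikov subgroup bounds are the engine here, but the paper's proof of Theorem~\ref{thm:Main3} does not use them at all.

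The actual argument is much shorter and runs on the $\lambda$ side rather than the $(x,y)$ side. Write $I(p)=\sum_{t\mid p-1}\sum_{\mathrm{ord}\,\lambda=t}J(p;\lambda)^2$; since $p-1$ has $p^{o(1)}$ divisors, some fixed $t$ dominates, and a dyadic decomposition in the size of $J(p;\lambda)$ gives $I(p)\lesssim |\cA|M^2$ for some $M$, where $\cA$ is the set of $\lambda$ of order $t$ with $M\le J(p;\lambda)<2M$. The two inputs are Lemma~\ref{lem:BBSh1}, giving the individual bound $M\lesssim pt^{-1/12}$, and Lemma~\ref{lem:BBSh}, giving the average bound $|\cA|M\lesssim t+p^{1/2}$. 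If $t<p^{1/2}$ then $I(p)\lesssim(|\cA|M)^2\lesssim p$; if $t>p^{1/2}$ then $I(p)\lesssim(|\cA|M)\cdot M\lesssim t\cdot pt^{-1/12}=pt^{11/12}\le p^{23/12}$. Without isolating these two lemmas (or proving equivalents), your plan cannot be completed, so as it stands there is a genuine gap.
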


In order to prove our results, we first reduce the problem to
estimates of exponential sums over subgroups. In the proof of
Theorem~\ref{thm:Main1} we use Shteinikov's result from~\cite{Sht},
while in the proof of Theorem~\ref{thm:Main2} we use Shkredov's
result from~\cite{Shk} (see, Lemma~\ref{lem:Sht} and
Lemma~\ref{lem:Shk} below).

In what follows, $\F_p$ is the field of residue classes modulo $p$.
The elements of $\F_p$ we associate with their concrete
representatives from $\{0,1,\ldots,p-1\}$. For an integer $m$
coprime to $p$ by $m^{*}$ we denote the smallest positive integer
such that $m^{*}m\equiv 1\pmod p$. We also use the abbreviation
$$
e_p(z)=e^{2\pi i z/p}.
$$

{\bf Acknowledgement}. J. Cilleruelo was supported by the grants MTM
2011-22851 of MICINN and ICMAT Severo Ochoa project SEV-2011-0087.
M. Z. Garaev was supported by the sabbatical grant from PASPA-DGAPA-
UNAM.

\section{Lemmas}

\begin{lemma}
\label{lem:CongrMoment} Let
$$
\lambda\not\equiv 0\pmod p,\quad n\in\N,\quad 1\le M\le p.
$$
Then for any fixed constant $k\in \N$ the number $J$ of solutions of
the congruence
$$
x^{n}\equiv \lambda \pmod p,\quad x\in\N,\quad x\le M,
$$
satisfies
$$
J\lesssim \Bigl(1+\frac{M}{p^{1/k}}\Bigr)n^{1/k}.
$$
In particular, if $n=dt<p$ and $M=p/d$, then we have the bound
$$
J\lesssim \Bigl(d^{1/k}+\Bigl(\frac{p}{d}\Bigr)^{1-1/k}\Bigr) t^{1/k}.
$$
\end{lemma}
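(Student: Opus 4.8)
The plan is to bound the number of solutions by a standard divisor-type argument combined with the fact that the map $x \mapsto x^n$ has at most $\gcd(n, p-1)$ preimages, but crucially we want a bound that beats the trivial $\gcd(n,p-1)$ when $M$ is much smaller than $p$. The key idea is a ``tensor power'' or ``$k$-fold product'' trick: if $x_1, \dots, x_k$ all satisfy $x_i^n \equiv \lambda \pmod p$, then $(x_1 \cdots x_k)^n \equiv \lambda^k \pmod p$, so the product $y = x_1 \cdots x_k$ lies in a fixed coset of the subgroup of $n$-th powers. Since each $x_i \le M$, the product $y$ ranges over $[1, M^k]$ as an integer, but modulo $p$ it is confined to the solution set of $y^n \equiv \lambda^k$, which has size at most $\gcd(n, p-1) \le n$.

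First I would count, for each residue $r$ with $r^n \equiv \lambda^k \pmod p$, the number of $k$-tuples $(x_1, \dots, x_k) \in [1,M]^k$ with $x_1 \cdots x_k \equiv r \pmod p$. The integer $x_1 \cdots x_k$ lies in $[1, M^k]$, so it equals $r + mp$ for some $0 \le m \le M^k/p$, giving at most $1 + M^k/p$ choices for the integer value $N = x_1\cdots x_k$; for each such $N$, the number of ordered factorizations into $k$ factors each at most $M$ (indeed each at most $N$) is at most $d_k(N) \lesssim 1$, i.e. $N^{o(1)} \le p^{o(1)}$ uniformly. Summing over the at most $n$ admissible residues $r$, the total number of $k$-tuples is $\lesssim n(1 + M^k/p)$. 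On the other hand, if $J$ is the number of solutions $x \le M$ of $x^n \equiv \lambda$, then every one of the $J^k$ ordered $k$-tuples is counted, so $J^k \lesssim n(1 + M^k/p) \lesssim n(1 + M/p^{1/k})^k$, and taking $k$-th roots gives $J \lesssim n^{1/k}(1 + M/p^{1/k})$, which rearranges to the claimed bound since $1 + M/p^{1/k} \asymp 1 + M/p^{1/k}$.

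For the ``in particular'' clause, I would simply substitute $n = dt$ and $M = p/d$: then $M/p^{1/k} = p^{1-1/k}/d$, and $n^{1/k} = d^{1/k} t^{1/k}$, so
$$
J \lesssim \Bigl(1 + \frac{p^{1-1/k}}{d}\Bigr) d^{1/k} t^{1/k} = \Bigl(d^{1/k} + \frac{p^{1-1/k}}{d^{1-1/k}}\Bigr) t^{1/k} = \Bigl(d^{1/k} + \bigl(p/d\bigr)^{1-1/k}\Bigr) t^{1/k},
$$
as required. The main obstacle — really the only nontrivial point — is the bound $d_k(N) \lesssim 1$ for the number of ordered $k$-factorizations; this follows from the standard estimate $d_k(N) = N^{o(1)}$ for fixed $k$ (e.g. $d_k(N) \le d_2(N)^{k-1}$ and $d_2(N) = N^{o(1)}$), and since $N \le M^k \le p^k$ with $k$ fixed, $N^{o(1)} = p^{o(1)}$, which is absorbed into the $\lesssim$ notation. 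One should also note that the two estimates are genuinely equivalent: the case $M \le p^{1/k}$ gives $J \lesssim n^{1/k}$ directly from there being at most $n$ solutions overall together with the refinement, while for $M > p^{1/k}$ the $M/p^{1/k}$ term dominates.
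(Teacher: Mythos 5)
Your proposal is correct and follows essentially the same argument as the paper: the $k$-th power (tensoring) trick reducing $J^k$ to counting $k$-tuples with $(x_1\cdots x_k)^n\equiv\lambda^k\pmod p$, the observation that there are at most $n$ admissible residues for the product, the lift to an integer equation $x_1\cdots x_k=\lambda_0+py$ with at most $1+M^k/p$ choices of $y$, and the divisor-function bound $d_k(N)\lesssim 1$. The only cosmetic difference is that you sum over all admissible residues while the paper pigeonholes onto the worst one; the substitution for the ``in particular'' clause is likewise identical.
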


\begin{proof} We have
$$
J^k\lesssim \#\{(x_1,\ldots,x_k)\in \N^k\cap [1, M]^k;\quad (x_1\ldots x_k)^n\equiv \lambda^k\pmod p\}.
$$
Since for a given integer $\mu$ the congruence
$$
X^n\equiv \mu\pmod p,\quad X\in\N,
\quad X\le p,
$$ has at most $n$
solutions, there exists a positive integer $\lambda_0<p$ such that
$$
J^k\lesssim nJ_1,
$$
where $J_1$ is the number of solutions of the congruence
$$
x_1\ldots x_k\equiv \lambda_0\pmod p;\quad (x_1,\ldots,x_k)\in \N^k\cap [1, M]^k.
$$
It follows that
$$
x_1\ldots x_k=\lambda_0+py;\quad (x_1,\ldots,x_k)\in \N^k\cap [1, M]^k,\quad y\in\Z.
$$
Since the left hand side of this equation does not exceed $M^k$, we
get that $|y|\le M^k/p$. Hence, for some fixed $y_0$ we have
$$
J_1\lesssim \Bigl(1+\frac{M^k}{p}\bigr)J_2,
$$
where $J_2$ is the number of solutions of the equation
$$
x_1\ldots x_k=\lambda_0+py_0;\quad (x_1,\ldots,x_k)\in \N^k\cap [1,
M]^k.
$$
Hence, from the bound for the divisor function it follows that
$J_2\lesssim 1$. Thus,
$$
J^k\lesssim \Bigl(1+\frac{M^k}{p}\bigr)n.
$$
and the result follows.
\end{proof}

Let $H_d$ be the subgroup of $\F_p^*=\F_p\setminus\{0\}$ of order
$d$. From the classical estimates for exponential sums over
subgroups it is known that
$$
\Bigl|\sum_{h\in H_d}e_p(ah)\Bigr|\le p^{1/2}.
$$
For a wide range of $d$ this bound has been improved in a serious of
works. Here, we need the results due to Shteinikov~\cite{Sht} (see
Lemma~\ref{lem:Sht} below) and Shkredov~\cite{Shk} (see
Lemma~\ref{lem:Shk} below). They will be used  in the proof of
Theorem~\ref{thm:Main1} and Theorem~\ref{thm:Main2}, respectively.
\begin{lemma}
\label{lem:Sht} Let $H_d$ be the subgroup of $\F_p^*$ of order
$d<p^{1/2}$. Then for any integer $a\not\equiv 0\pmod p$ the
following bound holds:
$$
\Bigl|\sum_{h\in H_d}e_p(ah)\Bigr|\lesssim p^{1/18}d^{101/126}.
$$
\end{lemma}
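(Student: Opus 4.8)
Since Lemma~\ref{lem:Sht} is Shteinikov's theorem from \cite{Sht}, in the paper itself it will simply be invoked; but here is the route I would take to prove it.

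The plan is to first trade the character sum for an additive-energy estimate for $H_d$, and then quote (or re-derive) the relevant energy bound. For the first step, fix $a\not\equiv0\pmod p$ and set $\sigma=\bigl|\sum_{h\in H_d}e_p(ah)\bigr|$. Since $\xi H_d=H_d$ for every $\xi\in H_d$, re-indexing $h\mapsto\xi h$ gives $\sum_{h\in H_d}e_p(a\xi h)=\sum_{h\in H_d}e_p(ah)$, so the function $n\mapsto\bigl|\sum_{h\in H_d}e_p(nh)\bigr|$ equals $\sigma$ at each of the $d$ points of the coset $aH_d\subseteq\{1,\dots,p-1\}$. Hence, for any fixed $k\in\N$,
\[
d\,\sigma^{2k}\;\le\;\sum_{n=0}^{p-1}\Bigl|\sum_{h\in H_d}e_p(nh)\Bigr|^{2k}\;=\;p\,T_k(H_d),
\]
where $T_k(H_d)$ denotes the number of solutions of $h_1+\cdots+h_k=h_{k+1}+\cdots+h_{2k}$ with all $h_i\in H_d$; so $\sigma\le\bigl(pT_k(H_d)/d\bigr)^{1/(2k)}$, and everything reduces to bounding $T_k(H_d)$ for a well-chosen $k$.

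For the second step I would use the known upper bounds for the additive energies of multiplicative subgroups. When $k=2$ this is the additive energy $E^{+}(H_d)$, controlled in the relevant ranges by the bounds of Heath-Brown--Konyagin, Konyagin, Bourgain--Garaev and Shkredov; for larger $k$ one iterates, at each stage exploiting the multiplicative structure of $H_d$ (in particular $H_d^{-1}=H_d$) to recast the count as a count of collinear or coplanar points and then invoking point--line and point--plane incidence bounds over $\F_p$ (of Rudnev, Stevens--de Zeeuw, and others). The exponents $1/18$ and $101/126$ emerge from optimising $k$ together with the parameters in the incidence input against the loss $p/d$ in the reduction above; note $p^{1/18}d^{101/126}=(p^{7}d^{101})^{1/126}$, which fixes the shape of that optimisation.

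The hard part will be the energy estimate, not the reduction: squeezing $T_k(H_d)$ down to the level required needs the sharpest incidence theorems currently available, together with careful treatment of the degenerate point configurations those theorems must exclude, and it is there that the unlovely constants originate. I also note that the hypothesis $d<p^{1/2}$ is precisely the range in which these incidence-based estimates apply and improve on the classical bound $\bigl|\sum_{h\in H_d}e_p(ah)\bigr|\le p^{1/2}$; for larger $d$ one needs different arguments.
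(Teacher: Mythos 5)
The paper gives no proof of this lemma: it is quoted directly from Shteinikov~\cite{Sht}, exactly as you anticipate, so there is no internal argument to compare against. Your reduction $d\sigma^{2k}\le\sum_{n=0}^{p-1}\bigl|\sum_{h\in H_d}e_p(nh)\bigr|^{2k}=p\,T_k(H_d)$ (using that the sum has constant modulus on the coset $aH_d$) is the correct and standard first step of the arguments in that literature, and your description of where the exponents $1/18$ and $101/126$ come from --- choosing $k$ and invoking incidence-based bounds for $T_k(H_d)$ --- is consistent with how the cited result is actually obtained, though the substantive ingredient, the energy estimate itself, is (legitimately) left to the reference.
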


\begin{lemma} \label{lem:Shk} Let $H_d$ be the subgroup of $\F_p^*$
of order $d<p^{2/3}$. Then for any integer $a\not\equiv 0\pmod p$
the following bound holds:
$$
\Bigl|\sum_{h\in H_d}e_p(ah)\Bigr|\lesssim p^{1/6}d^{1/2}.
$$
\end{lemma}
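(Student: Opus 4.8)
The plan is to run a moment argument that exploits the fact that $S(a):=\sum_{h\in H_d}e_p(ah)$ is constant on multiplicative cosets: since $S(at)=S(a)$ for every $t\in H_d$, among the values $|S(a)|$ with $a\not\equiv 0\pmod p$ there are only $(p-1)/d$ distinct ones, each occurring exactly $d$ times, so $M:=\max_{a\not\equiv 0}|S(a)|$ is attained on a whole coset of $H_d$. Fixing an integer $k\ge 2$ and expanding the $2k$-th moment by orthogonality,
\[
\sum_{a=0}^{p-1}|S(a)|^{2k}=p\,\sigma_k(H_d),\qquad \sigma_k(H_d):=\#\bigl\{(h_1,\dots,h_{2k})\in H_d^{2k}:\ h_1+\cdots+h_k\equiv h_{k+1}+\cdots+h_{2k}\pmod p\bigr\},
\]
where the term $a=0$ contributes $d^{2k}$. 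Hence $M^{2k}\,d\le p\,\sigma_k(H_d)-d^{2k}$, and the lemma follows as soon as one establishes, for a single suitable $k$, the additive-energy estimate $\sigma_k(H_d)\lesssim p^{k/3-1}d^{k+1}+d^{2k}/p$. I would work with $k=3$, where this reads $\sigma_3(H_d)\lesssim d^4+d^6/p$.

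The bound $\sigma_k(H_d)\ge d^{2k}/p$ is automatic from Cauchy--Schwarz, so the real task is the main term $p^{k/3-1}d^{k+1}$, which for $d<p^{2/3}$ dominates and amounts to saying that the representation function of the $k$-fold sumset $kH_d$ is flat up to a power-saving factor. This is where the multiplicative structure of $H_d$ must be used, and it is the main obstacle. The route I would follow is incidence-geometric: bound $\sigma_3(H_d)$ by counting incidences between the point set $H_d\times H_d$ and the family of lines $\{(x,y):\ x+\mu y=\nu\}$ (equivalently, translates and dilates of $H_d\times H_d$), and feed in a point--line incidence bound over $\F_p$. The input that makes this work is that a multiplicative subgroup of order $d<p^{2/3}$ is ``incidence-poor'': it cannot concentrate on few cosets of an additively structured set, since such concentration would contradict sum--product / Stepanov-type rigidity. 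For small $d$ (say $d\le p^{1/2}$) classical Heath-Brown--Konyagin-type bounds for the additive energy of subgroups already suffice; the sharper bound needed in the full range $p^{1/2}\le d<p^{2/3}$ is exactly the content of Shkredov's theorem in~\cite{Shk}.

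The hard part, concretely, is that for $d$ near $p^{2/3}$ the estimate $\sigma_3(H_d)\lesssim d^6/p$ is essentially the assertion that $3H_d$ equidistributes up to a $p^{o(1)}$ factor, which is genuinely deep; and even away from that endpoint, turning the raw incidence count into the stated bound requires carefully isolating the diagonal and wrap-around contributions (these produce precisely the secondary summand $d^{2k}/p$) and controlling them uniformly in $d$, which is where most of the bookkeeping lies. The hypothesis $d<p^{2/3}$ is natural for the method: at $d=p^{2/3}$ the claimed bound $p^{1/6}d^{1/2}$ is just the classical $p^{1/2}$, and both the target estimate and the underlying energy bound become vacuous for larger $d$.
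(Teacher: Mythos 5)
This lemma is not proved in the paper at all: it is quoted from Shkredov~\cite{Shk} as an external input, so there is no internal argument to compare yours against. Your completion/moment reduction is the standard first step and is set up correctly (the coset-invariance $S(at)=S(a)$ for $t\in H_d$, the orthogonality identity $\sum_{a}|S(a)|^{2k}=p\,\sigma_k(H_d)$, and the inequality $M^{2k}d\le\sum_{a\not\equiv 0}|S(a)|^{2k}$ are all fine). But the proposal does not amount to a proof, for two reasons. First, and decisively, the entire difficulty is concentrated in the energy estimate $\sigma_3(H_d)\lesssim d^4+d^6/p$, and you explicitly defer it to ``the content of Shkredov's theorem in~\cite{Shk}'' --- that is, to the very result you are supposed to be proving. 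The incidence-geometric route you gesture at is a plausible description of how such bounds are obtained, but no incidence theorem is stated, no deduction of the energy bound from it is carried out, and the sum--product/Stepanov input is invoked only by name.

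Second, even granting the energy bound in the form you state, the $k=3$ bookkeeping does not close on the upper part of the range. From $M^{6}d\le p\,\sigma_3(H_d)-d^{6}$ and $\sigma_3(H_d)\le Cp^{o(1)}\bigl(d^4+d^6/p\bigr)$ you only obtain $M^{6}\lesssim pd^{3}+d^{5}$: the secondary term $d^{6}/p$ is \emph{not} cancelled by subtracting $d^{6}$, because the $p^{o(1)}$ loss leaves $Cp^{o(1)}d^{6}-d^{6}$ of size roughly $p^{o(1)}d^{6}$ rather than $0$. Hence for $p^{1/2}<d<p^{2/3}$ your argument yields only $M\lesssim d^{5/6}$, which is weaker than the claimed $p^{1/6}d^{1/2}$. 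To run the moment method in that range you need a bound on the \emph{excess} energy, $\sigma_3(H_d)-d^{6}/p\lesssim d^{4}$ (equivalently, a direct bound on $\sum_{a\not\equiv 0}|S(a)|^{6}$), which is a strictly stronger statement than the one you wrote down. Since the paper itself treats the lemma as a black box, simply citing~\cite{Shk} --- as you ultimately do --- is the appropriate course; presenting the reduction as a proof is not.
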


The following two results are due to Balog, Broughan and Shparlinski
from~\cite{BBSh} and~\cite{BBSh2}.
\begin{lemma}
\label{lem:BBSh} Uniformly over $t|p-1$, we have, as $p\to\infty$,
$$
\sum_{\substack{1\le \lambda\le p-1\\ {\rm ord} \,\lambda =t}}J(p; \lambda)\lesssim t+p^{1/2}.
$$
\end{lemma}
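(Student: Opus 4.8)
The plan is to reinterpret the left-hand side as a point count and then estimate that count by classical additive-character sums over the subgroups of $\F_p^*$. First I would note that each residue $\lambda$ with ${\rm ord}\,\lambda=t$ is produced by exactly $J(p;\lambda)$ values of $x$, so
$$
\sum_{\substack{1\le\lambda\le p-1\\ {\rm ord}\,\lambda=t}}J(p;\lambda)=\#\{x\in\N:\ x\le p-1,\ {\rm ord}\,(x^{x})=t\}=:N(t),
$$
where ${\rm ord}\,(x^x)$ denotes the multiplicative order of $x^x$ modulo $p$. If $x^x$ has order $t$ then $x^{xt}\equiv1\pmod p$, so it suffices to bound $N(t)\le\#\{x\le p-1:\ x^{xt}\equiv1\pmod p\}$.

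Next I would split this count according to $d:={\rm ord}\,(x\bmod p)$, which runs over the divisors of $p-1$. Putting $g=\gcd(d,t)$, an elementary manipulation shows $d\mid xt\iff (d/g)\mid x$, and of course ${\rm ord}\,(x\bmod p)=d$ forces $x\in H_d$. Hence
$$
\#\{x\le p-1:\ x^{xt}\equiv1\}\ \le\ \sum_{d\mid p-1}\#\{x\le p-1:\ x\in H_d,\ (d/g)\mid x\}.
$$
For a fixed $d$ I set $m=d/g$; since $1\le m\le p-1$, $m$ is invertible modulo $p$, and writing $x=my$ with $1\le y\le L:=(p-1)/m$ the inner count equals $\#\{y\le L:\ my\in H_d\}$.

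The heart of the argument is to estimate $\#\{y\le L:\ my\in H_d\}$ by expanding the indicator of $H_d$ in additive characters:
$$
\#\{y\le L:\ my\in H_d\}=\frac1p\sum_{a=0}^{p-1}\Bigl(\sum_{h\in H_d}e_p(ah)\Bigr)\sum_{y=1}^{L}e_p(-amy).
$$
The term $a=0$ contributes $dL/p\le d/m=g\le t$. For $a\ne0$ I would use the classical bound $|\sum_{h\in H_d}e_p(ah)|\le p^{1/2}$ recalled above, together with $|\sum_{y=1}^{L}e_p(by)|\ll p/\|b\|$ (where $\|b\|$ is the distance from $b$ to the nearest multiple of $p$) and the fact that $am$ runs over all nonzero residues as $a$ does; this makes the $a\ne0$ part $O(p^{1/2}\log p)$. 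Therefore each inner count is $\lesssim t+p^{1/2}$, and since $p-1$ has only $\tau(p-1)\lesssim p^{o(1)}$ divisors, summing over $d$ gives $N(t)\lesssim t+p^{1/2}$.

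I expect the only real points requiring care to be the elementary equivalence $d\mid xt\iff (d/\gcd(d,t))\mid x$ and the verification that $m=d/\gcd(d,t)$ is coprime to $p$, so that the change of variables $x=my$ and the character expansion are legitimate; the character-sum estimate itself is routine, and Lemma~\ref{lem:Sht} or Lemma~\ref{lem:Shk} could be substituted for the classical bound without affecting the outcome.
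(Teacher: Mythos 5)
Your proof is correct, but note that the paper does not prove this statement at all: Lemma~\ref{lem:BBSh} is imported verbatim as a known result of Balog, Broughan and Shparlinski from~\cite{BBSh}, so what you have written is a self-contained derivation of a cited black box. Your argument is sound at every step: the identification of the left-hand side with $\#\{x\le p-1:\ {\rm ord}(x^x)=t\}$, the relaxation to $x^{xt}\equiv 1$, the equivalence $d\mid xt\iff (d/\gcd(d,t))\mid x$ (which holds because $\gcd(d/g,t/g)=1$), the substitution $x=my$ with $m=d/g$ invertible modulo $p$, the completion with additive characters in which the $a=0$ term is $dL/p\le g\le t$ and the $a\neq 0$ terms contribute $O(p^{1/2}\log p)$ via the classical bound $|\sum_{h\in H_d}e_p(ah)|\le p^{1/2}$ together with Lemma~\ref{lem:Vin}, and the final summation over the $p^{o(1)}$ divisors of $p-1$. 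It is worth observing that your route is essentially the same scheme the paper itself uses to prove Theorems~\ref{thm:Main1} and~\ref{thm:Main2} (reduce to counting points of a subgroup in a dilated interval, complete the sum, bound the subgroup exponential sum), with the classical $p^{1/2}$ bound in place of the Shteinikov/Shkredov refinements; this explains why you land exactly on $t+p^{1/2}$, and why the paper's Theorem~\ref{thm:Main2} improves on this lemma precisely by inserting Lemma~\ref{lem:Shk} in the range where it is available. One small caveat: your closing remark that Lemma~\ref{lem:Sht} or Lemma~\ref{lem:Shk} ``could be substituted without affecting the outcome'' is only legitimate in the restricted ranges $d<p^{1/2}$, respectively $d<p^{2/3}$, so as stated for all $d\mid p-1$ it is not quite right --- but since your actual proof uses only the classical bound, this does not affect its validity.
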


\begin{lemma}
\label{lem:BBSh1} Uniformly over $t|p-1$ and all integers $\lambda$
with $\gcd(\lambda,p)=1$ and ${\rm ord}\,\lambda=t$, we have, as
$p\to\infty$,
$$
J(p;\lambda)\lesssim pt^{-1/12}.
$$
\end{lemma}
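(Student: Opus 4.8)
The plan is to reduce the count to a problem about elements of multiplicative subgroups lying in short intervals, and then to bring the equation $x^{x}\equiv\lambda$ itself back into play in order to gain over the elementary regime.

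First I would record the structural consequences of a solution. If $x^{x}\equiv\lambda\pmod p$ and ${\rm ord}\,\lambda=t$, then raising to the power $t$ gives $x^{xt}\equiv1$, so ${\rm ord}(x)\mid xt$; moreover the order of $x^{x}=\lambda$ equals $t$, which forces ${\rm ord}(x)=tc$ with $c=\gcd(x,{\rm ord}(x))$, $c\mid x$ and $\gcd(x/c,t)=1$. In particular $tc\mid p-1$, so $c$ runs over the divisors of $(p-1)/t$, of which there are $p^{o(1)}$. Writing $x=cy$, the condition ${\rm ord}(x)=tc$ means $x\in H_{tc}$, equivalently $y$ lies in the coset $c^{-1}H_{tc}$, while $1\le y\le(p-1)/c$. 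Thus $J(p;\lambda)\le\sum_{c\mid(p-1)/t}N_c$, where $N_c$ is the number of elements of a fixed coset of $H_{tc}$ inside the interval $[1,(p-1)/c]$.

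Next I would count $N_c$ by completing the sum: the number of elements of a coset $\xi H_d$ in $[1,L]$ equals $Ld/p$ plus an error bounded by $(\log p)\max_{a\not\equiv0}\bigl|\sum_{h\in H_d}e_p(ah)\bigr|$. With $d=tc$ and $L=(p-1)/c$ the main term is $\approx t$, so using the classical bound $\bigl|\sum_{h\in H_{tc}}e_p(ah)\bigr|\le p^{1/2}$ and summing over the $p^{o(1)}$ admissible $c$ gives $J(p;\lambda)\lesssim t+p^{1/2}$. This already reproves Lemma~\ref{lem:BBSh}, but it is not yet enough: the target $pt^{-1/12}$ improves on $t+p^{1/2}$ only when $t>p^{12/13}$, and in that range the bound $\approx t$ must be beaten by a factor $t^{13/12}/p$.

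The point I have so far discarded is the equation itself: I used only $x\in H_{tc}$, not $x^{x}\equiv\lambda$. To recover the saving I would reinstate it in the dominant case $c=1$ (the remaining $c$ being treated the same way). Fixing a generator $\gamma$ of $H_t$ and writing $x=\gamma^{k}$ with $\gcd(k,t)=1$, the condition $x^{x}\equiv\lambda$ becomes $kx\equiv\ell\pmod t$ with $\gcd(\ell,t)=1$, i.e.\ a congruence $x\equiv\ell k^{-1}\pmod t$ tying the integer representative $x=\gamma^{k}\bmod p$ to its own index $k$. Detecting this congruence by additive characters modulo $t$ turns $N_1$ into $t^{-1}\sum_{a}\sum_{k}e_t\bigl(a(\gamma^{k}\bmod p)\bigr)e_t(-a\ell k^{-1})$, whose $a\equiv0$ term is the harmless $\approx1$. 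The whole difficulty is concentrated in the off-diagonal terms, which reduce to bounding the mixed exponential sum $\sum_{h\in H_t}e_t(ah)$ over the subgroup $H_t\subset\F_p^{*}$ evaluated with the alien modulus $t\mid p-1$ (together with an additional twist by $k^{-1}\bmod t$). Establishing a nontrivial estimate for this mixed-modulus subgroup sum — which is exactly where the exponent $1/12$ should come from, and which does not follow from the standard bounds quoted in Lemma~\ref{lem:Sht} or Lemma~\ref{lem:Shk} — is the main obstacle of the argument.
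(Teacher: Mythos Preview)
This lemma is not proved in the paper: it is quoted from Balog, Broughan and Shparlinski \cite{BBSh,BBSh2} without argument, so there is no proof in the text for your attempt to be compared with.

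As for the attempt itself, it is not a proof. Your structural reduction --- grouping solutions by $c=\gcd(x,{\rm ord}\,x)$, so that $x$ lies in a coset of $H_{tc}$ inside $[1,(p-1)/c]$ --- is correct and, via the classical bound $\bigl|\sum_{h\in H_{tc}}e_p(ah)\bigr|\le p^{1/2}$, yields $J(p;\lambda)\lesssim t+p^{1/2}$; this already handles $t\le p^{12/13}$. For $t>p^{12/13}$ you reinstate the congruence $x^{x}\equiv\lambda$, arrive at a sum of the shape
\[
\sum_{\substack{1\le k\le t\\ \gcd(k,t)=1}} e_t\bigl(a\,k\,(\gamma^{k}\bmod p)\bigr),
\]
and then stop, declaring that bounding it ``is the main obstacle of the argument''. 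That is candid, but it means nothing has been established in the only range where the lemma has content: absent a nontrivial estimate, your character expansion collapses to $N_1\le\phi(t)$. The difficulty you isolate is real --- the sum mixes an additive character modulo $t$ with the multiplicative structure modulo $p$, and neither Lemma~\ref{lem:Sht} nor Lemma~\ref{lem:Shk} says anything about it. For the record, the argument in \cite{BBSh2} does not pass through such a mixed-modulus sum; the exponent $1/12$ there comes from sum--product estimates for several sets applied to the system of congruences generated by $x^{x}\equiv\lambda$.
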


We also need the following lemma.
\begin{lemma}\label{lem:easy}
Let $a,x$ be positive integers and let $d=\gcd(x,p-1)$. Then
$a^d\equiv 1\pmod p.$
\end{lemma}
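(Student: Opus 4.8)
I read Lemma~\ref{lem:easy} as carrying the evidently intended hypothesis $a^x\equiv 1\pmod p$ (without it the statement is false, e.g. $a=2$, $x=2$, $p=5$ gives $d=2$ but $2^2\not\equiv 1$); with that hypothesis the proof is short. The plan is to combine the B\'ezout identity with Fermat's little theorem. First I would observe that $a^x\equiv 1\pmod p$ forces $\gcd(a,p)=1$: if $p\mid a$ then $a^x\equiv 0\pmod p$, contradicting $a^x\equiv 1\pmod p$. Hence $a$ is a unit modulo $p$, so that powers $a^m$ with $m\in\Z$ (in particular $m<0$) make sense modulo $p$, and Fermat's little theorem gives $a^{p-1}\equiv 1\pmod p$.

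Next, since $d=\gcd(x,p-1)$, I would write $d=ux+v(p-1)$ for suitable integers $u,v$ (not necessarily nonnegative) by B\'ezout. Then
$$
a^{d}=a^{ux+v(p-1)}=\bigl(a^{x}\bigr)^{u}\bigl(a^{p-1}\bigr)^{v}\equiv 1^{u}\cdot 1^{v}=1\pmod p,
$$
where the middle equality is legitimate precisely because $a$ is invertible modulo $p$, so the possibly negative exponents $u,v$ cause no difficulty. This gives the claim.

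If one prefers to avoid B\'ezout, the same conclusion follows by letting $n$ be the multiplicative order of $a$ modulo $p$: from $a^{x}\equiv 1$ we get $n\mid x$, and from Fermat $n\mid p-1$, so $n\mid\gcd(x,p-1)=d$ and therefore $a^{d}\equiv 1\pmod p$. Either way there is essentially no obstacle; the only point requiring (minor) care is to record that $a$ is coprime to $p$ before invoking Fermat's little theorem or manipulating negative exponents.
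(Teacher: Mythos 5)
Your reading of the intended hypothesis $a^{x}\equiv 1\pmod p$ is correct --- the paper's own proof uses it silently in the step $x\cdot \mathrm{ind}\,a\equiv 0\pmod{(p-1)}$, and your counterexample confirms the statement as printed omits it. Your B\'ezout-plus-Fermat argument (and the order argument) is just the multiplicative rephrasing of the paper's additive argument with indices relative to a primitive root, so the proof is correct and essentially the same as the paper's.
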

This lemma is well-known and the proof is simple. Indeed, if ${\rm
ind}\, a$ is indice of $a$ with respect to some primitive root $g$
modulo $p$, then,
$$
x\cdot {\rm ind}\, a\equiv 0\pmod{(p-1)}.
$$
Therefore, $d \cdot {\rm ind}\, a \equiv 0\pmod{(p-1)}$, whence
$a^d\equiv 1 \pmod p$.

The following lemma is also well-known; see, for example, exercise
and solutions to chapter 3 in Vinogradov's book~\cite{Vin} for even
a more general statement.

\begin{lemma}\label{lem:Vin}
For any integers $U$ and $V>U$ the following bound holds:
$$
\sum_{a=1}^{p-1}\Bigl|\sum_{z=U}^V e_p(az)\Bigr| \lesssim p.
$$
\end{lemma}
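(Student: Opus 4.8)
The plan is to evaluate the inner sum as a geometric progression and then sum the resulting pointwise bound over $a$. For $1\le a\le p-1$ we have $e_p(a)\ne 1$, so
$$
\sum_{z=U}^V e_p(az)=e_p(aU)\cdot\frac{e_p\bigl(a(V-U+1)\bigr)-1}{e_p(a)-1},
$$
and since $|e^{i\theta}-1|=2|\sin(\theta/2)|$ and the numerator has modulus at most $2$ no matter how large $V-U+1$ is, this yields the standard estimate
$$
\Bigl|\sum_{z=U}^V e_p(az)\Bigr|\le\frac{2}{|e_p(a)-1|}=\frac{1}{|\sin(\pi a/p)|}.
$$

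Next I would pass from $\sin$ to a linear lower bound. Using $\sin(\pi x)\ge 2x$ for $0\le x\le 1/2$ (concavity of $\sin$ on $[0,\pi/2]$) together with the symmetry $\sin(\pi a/p)=\sin(\pi(p-a)/p)$, one gets $|\sin(\pi a/p)|\ge 2\min(a,p-a)/p$ for every $a$ with $1\le a\le p-1$. Hence, grouping the values of $a$ according to $\min(a,p-a)$ (each value being attained by at most two $a$),
$$
\sum_{a=1}^{p-1}\Bigl|\sum_{z=U}^V e_p(az)\Bigr|\le\sum_{a=1}^{p-1}\frac{p}{2\min(a,p-a)}\le p\sum_{1\le j\le p/2}\frac1j\ll p\log p.
$$
Finally, $p\log p<p\cdot p^{\varepsilon}$ for every fixed $\varepsilon>0$ once $p$ is large, so the left-hand side is $\lesssim p$, which is the claim.

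There is essentially no serious obstacle here. The only points that need a little care are that the geometric-series estimate $|e_p(a(V-U+1))-1|\le 2$ is valid regardless of whether $V-U+1$ exceeds $p$, so there is no need to reduce the summation range modulo $p$; and that the harmonic sum genuinely contributes a factor $\log p$, which however is absorbed into the $p^{o(1)}$ implicit in the notation $\lesssim$. Indeed $\sum_{a=1}^{p-1}1/|\sin(\pi a/p)|\asymp p\log p$, so this logarithmic loss is unavoidable with the present method but inconsequential for the stated bound.
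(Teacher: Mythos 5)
Your proof is correct and complete: the geometric-series evaluation, the bound $|\sin(\pi a/p)|\ge 2\min(a,p-a)/p$, and the harmonic sum giving $p\log p$, which the notation $\lesssim$ absorbs. The paper offers no proof of this lemma, referring instead to an exercise in Vinogradov's book; the argument you give is exactly that classical one, so there is nothing further to reconcile.
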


\section{Proof of Theorem~\ref{thm:Main1}}

We have
$$
J(p;1)=\sum_{d|p-1}J_d',
$$
where $J_d'$ is the number of solutions of~\eqref{eqn:Th1} with
$\gcd(x,p-1)=d.$ It then follows  by Lemma \ref{lem:easy} that
$$
J(p;1)\le \sum_{d|p-1}J_d,
$$
where $J_d$ is the number of solutions of the congruence
$$
z^d\equiv (d^d)^{*}\pmod p,\quad z\in\N, \quad z\le (p-1)/d.
$$
We have therefore,
$$
J(p;1)\le R_1+R_2+R_3+ \sum_{\substack{d|p-1\\ d < p^{3/7}}}J_d,
$$
where
$$
R_1=\sum_{\substack{d|p-1\\ d > p^{5/7}}}J_d;\quad
R_2=\sum_{\substack{d|p-1\\ p^{4/7}<d< p^{5/7}}}J_d; \quad R_3=\sum_{\substack{d|p-1\\ p^{3/7}<d\le p^{4/7}}}J_d.
$$
The trivial estimate $J_d\le p/d$ implies that
$$
R_1\lesssim \sum_{\substack{d|p-1\\ d > p^{5/7}}}\frac{p}{d}\lesssim \sum_{d|p-1}p^{2/7}\lesssim p^{2/7}.
$$
To estimate $R_2$ we use Lemma~\ref{lem:CongrMoment} with $k=3$ and
get
\begin{equation*}
\begin{split}
R_2=\sum_{\substack{d|p-1\\ p^{4/7}<d< p^{5/7}}}J_d \lesssim
\sum_{\substack{d|p-1\\ p^{4/7}<d< p^{5/7}}} (d^{1/3}+(p/d)^{2/3})\lesssim \sum_{d|p-1} p^{2/7}\lesssim p^{2/7}.
\end{split}
\end{equation*}
To estimate $R_3$ we use Lemma~\ref{lem:CongrMoment} with $k=2$ and
get
\begin{equation*}
\begin{split}
R_3=\sum_{\substack{d|p-1\\ p^{3/7}<d< p^{4/7}}}J_d \lesssim
\sum_{\substack{d|p-1\\ p^{3/7}<d< p^{4/7}}} (d^{1/2}+(p/d)^{1/2})\lesssim \sum_{d|p-1} p^{2/7}\lesssim p^{2/7}.
\end{split}
\end{equation*}
Thus,
$$
J(p;1)\lesssim p^{2/7}+\sum_{\substack{d|p-1\\ d < p^{3/7}}}J_d.
$$
Hence, there exists  $d|p-1$ with $d<p^{3/7}$ such that
\begin{equation}
\label{eqn:R lesssim Jd}
J(p;1)\lesssim p^{2/7}+J_{d}.
\end{equation}
Applying Lemma~\ref{lem:CongrMoment} with $k=2$, we get
\begin{equation}
\label{eqn: J_d 1}
J_{d}\lesssim d^{1/2}+(p/d)^{1/2}\lesssim (p/d)^{1/2}.
\end{equation}
Let now $H_d$ be the subgroup of $\F_p^*$ of order $d$. We recall
that $J_d$ is the number of solutions of the congruence
$$
(dz)^d\equiv 1\pmod p;\quad z\in \N, \quad z\le (p-1)/d.
$$
Therefore,
$$
J_d= \#\{z\in \N; \quad z\le (p-1)/d,\quad dz\pmod p\in H_d\}.
$$
It then follows that
$$
J_d=\frac{1}{p}\sum_{a=0}^{p-1}\sum_{1\le z\le (p-1)/d}\,\sum_{h\in H_d}e_p(a(dz-h)).
$$
Separating the term corresponding to $a=0$ and using
Lemma~\ref{lem:Sht} for $a\not=0$, we get
$$
J_d\le 1+p^{1/18}d^{101/126}\Bigl(\frac{1}{p}\sum_{a=1}^{p-1}\Bigl|\sum_{1\le z\le (p-1)/d}e_p(adz)\Bigr|\Bigr)\lesssim p^{1/18}d^{101/126}.
$$
Using Lemma~\ref{lem:Vin}, we get the following bound for the
double:
$$
\sum_{a=1}^{p-1}\Bigl|\sum_{1\le z\le (p-1)/d}e_p(adz)\Bigr|=\sum_{b=1}^{p-1}\Bigl|\sum_{1\le z\le (p-1)/d}e_p(bz)\Bigr|\lesssim p.
$$
Therefore
$$
J_d\lesssim p^{1/18}d^{101/126}.
$$
Comparing this estimate with~\eqref{eqn: J_d 1} we obtain
$$
J_d\lesssim p^{27/82}.
$$
Incorporating this in~\eqref{eqn:R lesssim Jd}, we get the desired
result.

\section{Proof of Theorem~\ref{thm:Main2}}
In view of Lemma~\ref{lem:BBSh}, it suffices to deal with the case
$t<p^{1/3}$.

Since $\lambda^t\equiv 1\pmod p$, it follows
from~\eqref{eqn:MainCongr} that
$$
\sum_{\substack{1\le \lambda\le p-1\\ {\rm ord} \,\lambda =t}}J(p; \lambda)\le \#\{x\in \mathbb{N};\quad x^{tx}\equiv 1\pmod p, \quad x\le p-1\}
$$
Hence, denoting $d=\gcd(x,(p-1)/t)$ and using Lemma \ref{lem:easy}
we obtain that
$$
\sum_{\substack{1\le \lambda\le p-1\\ {\rm ord} \,\lambda =t}}J(p; \lambda) \le \sum_{d | (p-1)/t} T_d,
$$
where $T_d$ is the number of solutions of the congruence
$$
z^{dt}\equiv (d^{dt})^{*}\pmod p,\quad z\in\N, \quad z\le (p-1)/d.
$$
By the trivial estimate $T_d\le p/d$ we have
$$
\sum_{\substack {d|p-1\\ d>p^{2/3}}}T_d\le \sum_{d|p-1} p^{1/3}\lesssim p^{1/3}.
$$
Furthermore, applying Lemma~\ref{lem:CongrMoment} with $k=2$, we get
$$
\sum_{\substack {d|p-1\\ p^{1/3}<d<p^{2/3}}} T_d\le \sum_{\substack {d|p-1\\ p^{1/3}<d<p^{2/3}}} \Bigl(d^{1/2}+(p/d)^{1/2}\Bigr)t^{1/2}\lesssim p^{1/3}t^{1/2}.
$$
Therefore,
\begin{equation}
\label{eqn:preultimateTh2}
\sum_{\substack{1\le \lambda\le p-1\\ {\rm ord} \,\lambda =t}}J(p; \lambda) \le p^{1/3}t^{1/2} + \sum_{\substack {d | (p-1)/t\\ d<p^{1/3}}} T_d.
\end{equation}
Recall that $t<p^{1/3}$, thus $dt| p-1$ and $dt<p^{2/3}$.

Let $H_{dt}$ be the subgroup of $\F_p^*$ of order $dt$. Since  $T_d$
is the number of solutions of the congruence
$$
(dz)^{dt}\equiv 1\pmod p;\quad z\in \N, \quad z\le (p-1)/d,
$$
it follows that
$$
T_d= \#\{z\in \N; \quad z\le (p-1)/d,\quad dz\pmod p\in H_{dt}\}.
$$
Therefore,
$$
T_d=\frac{1}{p}\sum_{a=0}^{p-1}\sum_{1\le z\le (p-1)/d}\sum_{h\in H_{dt}}e_p(a(dz-h)).
$$
Separating the term corresponding to $a=0$ and using
Lemma~\ref{lem:Shk} for $a\not=0$ (with $d$ replaced by $dt$), we
get
$$
T_d\le t+p^{1/6}d^{1/2}t^{1/2}\Bigl(\frac{1}{p}\sum_{a=1}^{p-1}\Bigl|\sum_{1\le z\le (p-1)/d}e_p(adz)\Bigr|\Bigr).
$$
Applying Lemma~\ref{lem:Vin} to the double sum, as in the proof of
Theorem~\ref{thm:Main1}, we obtain for $d<p^{1/3}$ the bound
$$
T_d\lesssim t+p^{1/6}d^{1/2}t^{1/2}\lesssim t+p^{1/3}t^{1/2}.
$$
Thus,
$$
\sum_{\substack {d | (p-1)/t\\ d<p^{1/3}}} T_d\le\sum_{d | p-1}(t+p^{1/3} t^{1/2})\lesssim t+ p^{1/3}t^{1/2}.
$$
Putting this into~\eqref{eqn:preultimateTh2}, we conclude the proof.

\section{Proof of Theorem~\ref{thm:Main3}}

We follow the arguments of~\cite{BBSh} with some modifications. We
have
$$
I(p)=\sum_{\lambda=1}^{p-1}J(p;\lambda)^2=\sum_{t|p-1}\, \sum_{\substack{1\le\lambda\le p-1\\ {\rm ord}\,\lambda=t}}J(p;\lambda)^2.
$$
It then follows that for some fixed order $t|p-1$ we have
$$
I(p)\lesssim \sum_{\substack{1\le\lambda\le p-1\\ {\rm ord}\,\lambda=t}}J(p;\lambda)^2.
$$
We can split the range of $J(p;\lambda)$ into $O(\log p)$ dyadic
intervals. Then, for some $1\le M\le p$, we have
\begin{equation}
\label{eqn:I(p)<AMM}
I(p)\lesssim |\mathcal{A}|M^2,
\end{equation}
where $|\mathcal{A}|$ is the cardinality of the set
$$
\mathcal{A}=\{1\le \lambda\le p-1; \quad {\rm ord}\,\lambda=t, \quad M\le J(p;\lambda)<2M\}.
$$
From Lemma~\ref{lem:BBSh1} we have
\begin{equation}
\label{eqn:M<pt}
M\lesssim pt^{-1/12}.
\end{equation}
On the other hand, by Lemma~\ref{lem:BBSh} we also have
$$
|\cA|M\lesssim \sum_{\substack{\lambda\in\mathcal{A}}}J(p;\lambda)\lesssim \sum_{\substack{1\le\lambda\le p-1\\ {\rm ord}\,\lambda=t}}J(p;\lambda)\lesssim t+p^{1/2}.
$$
If $t<p^{1/2}$, then using~\eqref{eqn:I(p)<AMM} we get
$$
I(p)\lesssim |\cA|M^2\lesssim (|\cA|M)^2 \lesssim p,
$$
and the result follows. If $t>p^{1/2}$, then we get $|\cA|M\lesssim
t$. Therefore, using~\eqref{eqn:I(p)<AMM} and~\eqref{eqn:M<pt} we
get
$$
I(p)\lesssim |\cA|M^2\lesssim t (pt^{-1/12})=pt^{11/12}\lesssim p^{23/12}.
$$
This proves Theorem~\ref{thm:Main3}.

Address of the authors:\\

J. Cilleruelo, Instituto de Ciencias Matem\'{a}ticas
(CSIC-UAM-UC3M-UCM) and Departamento de Matem\'{a}ticas, Universidad
Aut\'{o}noma de Madrid, Madrid-28049, Spain.

Email:{\tt franciscojavier.cilleruelo@uam.es}.

\vspace{1cm}

M.~Z.~Garaev, Centro de Ciencias Matem\'{a}ticas,  Universidad
Nacional Aut\'onoma de M\'{e}xico, C.P. 58089, Morelia,
Michoac\'{a}n, M\'{e}xico,

Email:{\tt garaev@matmor.unam.mx}

\end{document}